\documentclass{amsart}
\usepackage{amsfonts,amssymb,amscd,amsmath,enumerate,verbatim,newlfont,calc}
\usepackage{graphicx}
\RequirePackage[all]{xy}

\newtheorem{theorem}{Theorem}[section]
\newtheorem{theorem A}{Theorem A}
\newtheorem{theorem B}{Theorem B}
\newtheorem{lemma}[theorem]{Lemma}

\newtheorem{definition}[theorem]{Definition}
\newtheorem{corollary}[theorem]{Corollary}
\newtheorem{remark}[theorem]{Remark}

\newtheorem{proposition}[theorem]{Proposition}
\begin{document} 
\address{Department of Mathematics, Ferdowsi University of Mashhad, Mashhad, Iran}
\title{Subgroup Theorems for the $\tilde{B_0}$-invariant of groups}
\author[Z. Araghi Rostami]{Zeinab Araghi Rostami}
\author[M. Parvizi]{Mohsen Parvizi}
\author[P. Niroomand]{Peyman Niroomand}
\address{Department of Pure Mathematics\\
Ferdowsi University of Mashhad, Mashhad, Iran}
\email{araghirostami@gmail.com, zeinabaraghirostami@alumni.um.ac.ir}
\address{Department of Pure Mathematics\\
Ferdowsi University of Mashhad, Mashhad, Iran}
\email{parvizi@um.ac.ir}
\address{Department of Mathematics, Damghan University, Damghan, Iran}
\email{niroomand@du.ac.ir}
\keywords{Bogomolov multiplier, $\tilde{B_0}$-invariant, Outer commutator word, VP cover, $\mathcal{V}$-stem, Schur-Baer variety}
\maketitle
\begin{abstract}
U. Jezernik and P. Moravec have shown that if $G$ is a finite group with a subgroup $H$ of index $n$, then nth power of the Bogomolov multiplier of $G$, $\tilde{B_0}(G)^n$ is isomorphic to a subgroup of $\tilde{B_0}(H)$. In this paper we want to prove a similar result for the center by center by $w$ variety of groups, where $w$ is any outer commutator word.
\end{abstract}

\section{\bf{Introduction and Preliminaries}}
Noether’s problem \cite{13} is one of the fundamental problems of invariant theory, and asks as to whether the field of $G$-invariant functions ${\Bbb{C}(V)}^{G}$ is purely transcendental over $\Bbb{C}$, where $G$ is a given finite group? Artin and Mumford in \cite{2} introduced an obstruction ${H_{nr}^{2}}({\Bbb{C}(V)}^{G},{\Bbb{Q}}/{\Bbb{Z}})$ to this problem, called the unramified Brauer group of the field extension ${\Bbb{C}(V)}^{G}/{\Bbb{C}}$. Bogomolov in \cite{4} proved that unramified cohomology group ${H_{nr}^{2}}({\Bbb{C}(V)}^{G},{\Bbb{Q}}/{\Bbb{Z}})$ is canonically isomorphic to
$B_0(G)=\bigcap_{A\leq G}\ \ker {{res}_{A}^{G}}$,
where $A$ is an abelian subgroup of $G$, and ${{res}_{A}^{G}}: H^2(G,{\Bbb{Q}}/{\Bbb{Z}})\longrightarrow H^2(A,{\Bbb{Q}}/{\Bbb{Z}})$ is the usual cohomological restriction map. $B_0(G)$ is defined as the subgroup of the Schur multiplier $\mathcal{M}(G)=H^2(G,{\Bbb{Q}}/{\Bbb{Z}})$ of $G$. Kunyavskii in \cite{8} named $B_0(G)$ the \emph{Bogomolov multiplier} of $G$. Moravec in \cite{11} showed that in the class of finite groups, the group $\tilde{B_0}(G)=\mathcal{M}(G)/\mathcal{M}_0(G)$ is noncanonically isomorphic to $B_0(G)$, where the Schur multiplier $\mathcal{M}(G)$ is isomorphic to $\ker (G\wedge G \rightarrow [G,G])$ given by $x\wedge y \rightarrow [x,y]$ and $\mathcal{M}_0(G)$ is a subgroup of $\mathcal{M}(G)$ defined as $\mathcal{M}_0(G)=<x\wedge y \ | \ [x,y]=0 , \ x,y\in G>$. Moravec in \cite{11} showed that if ${F}/{R}$ be a free presentation of the group $G$, then $\tilde{B_0}(G)\cong \frac{F'\cap R}{<K(F)\cap R>}$, and it is an invariant of $G$. Recently in \cite{1}, we generalized the concept of the Bogomolov multiplier with respect to a variety of groups. In this paper we give some new results on this topic that correspond to those for the Bogomolov multiplier obtained by Jezernik and Moravec in \cite{7} and Bogomolov in \cite{5}. Here, we give some preliminaries which are needed in the next section.
\\
Let $x$ and $y$ be two elements of a group $G$. Then $[x, y]$, the commutator of $x$ and $y$, and $x^y$ denote the elements $x^{-1}y^{-1}xy$ and $y^{-1}xy$, respectively.
The left normed commutators of higher weights are defined inductively as
$[x_1 , x_2 , . . . , x_n]=[[x_1,...,x_{n-1}] , x_n]$.
If $M$ and $N$ are two subgroups of a group $G$ then $[M,N]$ denotes the subgroup of $G$ generated by all the commutators $[m,n]$ with $m\in M$ and $n\in N$. In particular, if $M=N=G$ then $[G,G]$, which is denoted by $G'$, is the derived subgroup of $G$. The lower and upper central series are denoted by ${\gamma}_{c}(G)$, where $c\geqslant 1$ and $Z_{d}(G)$ where $d\geqslant 0$, respectively.
Let $F_{\infty}$ be the free group freely generated by an infinite countable set $\{x_1, x_2,...\}$. If $u=u(x_1,...,x_s)$ and $v=v(x_1,...,x_t)$ are two words in
$F_{\infty}$, then the composite of $u$ and $v$, $u{\circ}v$ is defined as 
$$u{\circ}v=u(v(x_1,...,x_t),...,v(x_{(s-1)t+1},...,x_{st})).$$
In particular, the composite of some nilpotent words is called a
poly nilpotent word, i.e.,
${\gamma}_{{c_1+1},...,{c_t+1}}={\gamma}_{c_1+1}{\circ}{\gamma}_{c_2+1}{\circ}...{\circ}{\gamma}_{c_t+1}$, where ${\gamma}_{c_i+1} (1\leq i \leq t)$ is a nilpotent word in distinct variables. Outer commutator words are defined inductively, as follows. The word $x_i$ is an outer commutator word (or o.c word) of weight one. If $u=u(x_1,...,x_s)$ and $v=v(x_{s+1},...,x_{s+t})$ are o.c words of weights $s$ and $t$, respectively, then $w(x_1,...,x_{s+t})=[u(x_1,...,x_s),v(x_{s+1},...,x_{s+t})]$ is an o.c word of weight $s+t$.
\\
Let $V$ be a subset of $F_{\infty}$ and $\mathcal{V}$ be the variety of groups defined by the set of laws $V$. P. Hall in \cite{6} introduced the notion of the verbal subgroup $V(G)$ and the marginal subgroup $V^{*}(G)$, associated
with the variety $\mathcal{V}$ and the group $G$ (see \cite{9,12}, for more information).
\\
Let $G$ be any group with a free presentation $G\cong {F}/{R}$ and $V$ be a set of o. c. words. Then the $\tilde{B_0}$-invariant of $G$, $\mathcal{V}\tilde{B_0}(G)$ with respect to the variety $\mathcal{V}$ is defined by
$$\mathcal{V}\tilde{B_0}(G)=\frac{R\cap V(F)}{<T(F)\cap R>},$$
where $T(F)=\{v(f_1,...,f_s) \ | \ v\in V, f_i\in F, 1\leq i\leq s\}$.
\\
In \cite{1}, we showed that the $\tilde{B_0}$-invariant of the group $G$ is always abelian and independent of the chosen free presentation of $G$. As a special case, if $\mathcal{A}$ is the variety of abelian groups, then $A(F)=F'$ and $T(F)=\{[x,y] \ | \ x,y\in F\}$. So, in the finite case, the $\tilde{B_0}$-invariant of $G$ is exactly the Bogomolov multiplier
$$\tilde{B_0}(G)\cong \frac{R\cap F'}{<T(F)\cap R>}$$
If $\mathcal{N}_{c}$ is the variety of nilpotent groups of class at most $c\geq 1$, then $N_c(F)={\gamma}_{c+1}(F)$ and $T(F)=\{[x_1,...,x_{c+1}] \ | \ x_i\in F, 1\leq i\leq c+1\}$. In this case, this multiplier is called the $c$-nilpotent $\tilde{B_0}$-multiplier of $G$, and defined as follows
$$\mathcal{N}_{c}\tilde{B_0}(G)={\tilde{B_0}}^{(c)}(G)=\frac{R\cap {\gamma}_{c+1}(F)}{<T(F)\cap R>}.$$

\begin{proposition}[\cite{1}-Proposition 3.6]\label{p2.1}
Let $\mathcal{V}$ be a variety of groups defined by the set of laws $V$ and let $G_1$ and $G_2$ be groups. Then
$$\mathcal{V}{\tilde{B_0}}(G_1\times G_2) \cong \mathcal{V}{\tilde{B_0}}(G_1) \times \mathcal{V}{\tilde{B_0}}(G_2).$$
\end{proposition}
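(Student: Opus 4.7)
The plan is to exploit the independence of $\mathcal{V}\tilde{B_0}(\cdot)$ from the chosen free presentation (proved in \cite{1}) in order to obtain functoriality, and then combine this with an explicit analysis of outer commutator words inside a free product.

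Fix free presentations $G_i\cong F_i/R_i$ for $i=1,2$, set $F:=F_1*F_2$, and note that $G_1\times G_2\cong F/S$, where $S$ denotes the normal closure in $F$ of $R_1\cup R_2\cup [F_1,F_2]$. The coordinate inclusions $\iota_i\colon G_i\hookrightarrow G_1\times G_2$ and projections $\pi_i\colon G_1\times G_2\twoheadrightarrow G_i$ lift to the free groups via $F_i\hookrightarrow F$ and $F\twoheadrightarrow F_i$, and these lifts respect the subgroups $V(-)$, $T(-)$, and the relation subgroups, so by the independence result they induce well-defined homomorphisms $\iota_{i*}$ and $\pi_{i*}$ on the $\tilde{B_0}$-invariants. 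Assembling them produces candidate maps
$$\psi\colon\mathcal{V}\tilde{B_0}(G_1)\times\mathcal{V}\tilde{B_0}(G_2)\longrightarrow\mathcal{V}\tilde{B_0}(G_1\times G_2),\qquad (a,b)\mapsto\iota_{1*}(a)\iota_{2*}(b),$$
and $\phi:=(\pi_{1*},\pi_{2*})$ in the reverse direction (the map $\psi$ is a homomorphism because $\mathcal{V}\tilde{B_0}(\cdot)$ is abelian). Since $\pi_i\circ\iota_i=\mathrm{id}_{G_i}$ while $\pi_i\circ\iota_j$ is trivial for $i\neq j$, functoriality immediately yields $\phi\circ\psi=\mathrm{id}$, hence $\psi$ is split injective and $\phi$ is surjective.

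The substantive step is to show $\psi$ is also surjective, equivalently that $\phi$ is injective. Given any $r\in S\cap V(F)$, the goal is to write $r\equiv r_1r_2\pmod{\langle T(F)\cap S\rangle}$ with $r_i\in R_i\cap V(F_i)$. Since $V$ consists of outer commutator words, for any $w\in V$ of weight $n$ and $g_1,\ldots,g_n\in F$ one may expand the value $w(g_1,\ldots,g_n)$ -- using the commutator identities $[xy,z]=[x,z]^y[y,z]$ and $[x,yz]=[x,z][x,y]^z$ inductively along the tree of $w$ -- as a product of pure $F_1$-values, pure $F_2$-values, and ``mixed'' $w$-values in which at least one argument belongs to $F_1$ and another to $F_2$. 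By an induction on the tree of $w$, a mixed $w$-value is contained in $[F_1,F_2]^F\subseteq S$ and is itself an element of $T(F)$, so it lies in $T(F)\cap S$ and can be discarded modulo $\langle T(F)\cap S\rangle$. Projecting the remaining pure factors onto the $F_i$ and using $r\in S$ then forces the $F_i$-contribution to lie in $R_i\cap V(F_i)$.

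The hardest part is this combinatorial bookkeeping: one must check simultaneously that none of the cross terms produced by the repeated application of the commutator identities escapes the control provided by $\langle T(F)\cap S\rangle$ and by the pure-factor subgroups $R_i\cap V(F_i)$. This step relies crucially on the outer-commutator hypothesis on $V$, which underlies the clean congruence $V(F_1*F_2)\equiv V(F_1)\cdot V(F_2)\pmod{[F_1,F_2]^F}$, and is where the real technical content of the proof lies.
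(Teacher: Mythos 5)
First, a point of comparison: the paper itself contains no proof of this proposition --- it is quoted from \cite{1} (Proposition 3.6) --- so there is no in-text argument to measure yours against. Judged on its own, your overall strategy (a split injection assembled from the coordinate inclusions and retractions, plus a decomposition of $w$-values over $F=F_1*F_2$ into pure and mixed parts, with the mixed parts absorbed into $\langle T(F)\cap S\rangle$) is the natural one for a K\"unneth-type statement about $\tilde{B_0}$-invariants, and the retraction half is fine once one grants that $\mathcal{V}\tilde{B_0}$ is functorial; that does follow from the lifting argument behind presentation-independence in \cite{1}, but note that well-definedness of the induced maps already requires the very congruence you need later, namely $v(e_1s_1,\dots,e_ks_k)\equiv v(e_1,\dots,e_k)$ modulo the subgroup generated by the $v$-values lying in the relation subgroup.

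The surjectivity half has a genuine gap precisely where you wave at ``combinatorial bookkeeping.'' Three concrete problems. (i) The identities $[xy,z]=[x,z]^y[y,z]$ and $[x,yz]=[x,z][x,y]^z$ produce not only pure and mixed values but also error terms of the form $[\omega,s]$ with $s\in S$ and $\omega$ a $w$-value, together with conjugates $w(a_1,\dots,a_n)^f$ of pure values by arbitrary $f\in F$. Such an error term lies in $S$, but it equals $\omega^{-1}\omega^{s}$, a quotient of two $w$-values neither of which individually lies in $S$, so its membership in $\langle T(F)\cap S\rangle$ is not automatic; one needs a separate lemma of the shape $w(f_1s_1,\dots,f_ns_n)\equiv w(f_1,\dots,f_n)\pmod{\langle T(F)\cap S\rangle}$ for $s_i\in S$, proved by induction along the tree of $w$ using closure properties of the value sets of the subwords, and this is where the outer-commutator hypothesis actually does work. (ii) A mixed value of a \emph{proper} subword $u$ of $w$ is not an element of $T(F)$ (only values of words in $V$ are), so it cannot be discarded at the level at which it appears; it must be carried up the tree until it is bracketed against a value of the complementary subword, at which point it becomes a $w$-value lying in $S$. (iii) Even after reducing to a product of pure $F_1$- and pure $F_2$-values, separating the interleaved factors requires knowing that commutators of pure $F_1$-values with pure $F_2$-values land in $\langle T(F)\cap S\rangle$ (they visibly lie in $[F_1,F_2]\subseteq S$, which is weaker), and identifying the $F_1$-contribution as an element of $R_1\cap V(F_1)$ uses $S\cap F_1=R_1$. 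None of this refutes your plan, but these verifications are the entire content of the proof, and as written your argument does not supply them.
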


\begin{definition}[\cite{1}-Definition 4.1]\label{d2.1}
Let $\mathcal{V}$ be a variety of groups and $Q$ be a group and $N$ be a $Q$-module. The extension $1\rightarrow N \xrightarrow{\chi} G \xrightarrow{\pi} Q \rightarrow 1$ of $N$ by $Q$ is a VP extension if trivial words of elements of $Q$ have trivial lifts in $G$.
\end{definition}

\begin{definition}[\cite{1}-Definition 4.4]\label{d'2.1}
The VP extension $1\rightarrow N \xrightarrow{\chi} G \xrightarrow{\pi} Q \rightarrow 1$ of $N$ by $Q$ is marginal, if $\chi(N)\subseteq V^{*}(G)$.
\end{definition}

\begin{proposition}[\cite{1}-Proposition 4.5]\label{p2.1}
Let $1\rightarrow N \xrightarrow{\chi} G \xrightarrow{\pi} Q \rightarrow 1$ be a marginal extension. This sequence is a VP extension if and only if $\chi(N)\cap T(G)=1$, where $T(G)=\{v(f_1,...,f_s) \ | \ f_i\in F , v\in V\}$.
\end{proposition}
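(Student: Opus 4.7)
My plan is to verify the two implications separately by element-chasing through the short exact sequence, using exactness to push $v$-values between $G$ and $Q$, and using the marginality hypothesis only to ensure that the VP condition is lift-independent.

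For the forward direction, I would take $x \in \chi(N)\cap T(G)$ and write $x=\chi(n)=v(g_1,\ldots,g_s)$ for some $v\in V$ and $g_i\in G$. Applying $\pi$, exactness gives $\pi(x)=1$, hence $v(\pi(g_1),\ldots,\pi(g_s))=1$ in $Q$. The $g_i$ are lifts of $\pi(g_i)\in Q$, so the VP hypothesis applied to this trivial $v$-relation in $Q$ forces $v(g_1,\ldots,g_s)=1$, i.e.\ $x=1$. Thus $\chi(N)\cap T(G)=1$.

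For the converse, I assume $\chi(N)\cap T(G)=1$ and take arbitrary $q_1,\ldots,q_s\in Q$ with $v(q_1,\ldots,q_s)=1$ for some $v\in V$. Choosing lifts $g_i\in G$ of the $q_i$, set $y=v(g_1,\ldots,g_s)$. By construction $y\in T(G)$, and $\pi(y)=v(q_1,\ldots,q_s)=1$ puts $y\in\ker\pi=\chi(N)$; the hypothesis then yields $y=1$. This is where marginality enters: since $\chi(N)\subseteq V^{*}(G)$, the defining property of the marginal subgroup ensures that altering any $g_i$ by an element of $\chi(N)$ leaves $v(g_1,\ldots,g_s)$ unchanged, so the conclusion is genuinely a statement about the elements $q_i\in Q$ and not an artifact of the chosen lifts.

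I do not anticipate a substantive obstacle; the proof is essentially the identification $\ker\pi=\chi(N)$ combined with the definition of $T(G)$. The only point that requires care is lift-independence in the converse, and marginality is exactly the hypothesis tailored to handle it, so once the two definitions are unpacked the equivalence is immediate.
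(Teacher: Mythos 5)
The paper states this proposition only as a quotation from reference [1] and gives no proof of its own, so your argument has to stand on its own merits. The overall structure (unwind the two definitions, identify $\ker\pi$ with $\chi(N)$, push $v$-values along $\pi$) is the right one, and your converse direction is complete exactly as written: given $\chi(N)\cap T(G)=1$ and a trivial relation $v(q_1,\ldots,q_s)=1$ in $Q$, \emph{every} choice of lifts $g_i$ satisfies $v(g_1,\ldots,g_s)\in T(G)\cap\ker\pi=T(G)\cap\chi(N)=1$, so trivial lifts certainly exist. Marginality plays no role there; your closing remark about lift-independence in that direction is superfluous, since the hypothesis already kills the value on all lifts simultaneously.

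The genuine gap is in the forward direction. A VP extension only guarantees that a trivial word $v(\pi(g_1),\ldots,\pi(g_s))=1$ admits \emph{some} system of lifts $h_1,\ldots,h_s$ with $v(h_1,\ldots,h_s)=1$; it does not by itself ``force'' the value on your particular elements $g_1,\ldots,g_s$ to be trivial, so the step ``the VP hypothesis \ldots forces $v(g_1,\ldots,g_s)=1$'' does not follow as stated. This is precisely where marginality is needed: each $g_i$ differs from $h_i$ by an element of $\ker\pi=\chi(N)\subseteq V^{*}(G)$, and the defining property of the marginal subgroup gives $v(g_1,\ldots,g_s)=v(h_1,\ldots,h_s)=1$, hence $x=1$. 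So the ingredient you need is exactly the observation you parked in the converse paragraph; once it is relocated to the forward implication, the proof is correct. (A small side remark: the definition of $T(G)$ in the statement should read $f_i\in G$ rather than $f_i\in F$ --- a typo carried over from the free-presentation setting --- and your argument implicitly reads it that way.)
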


\begin{definition}[\cite{1}-Definition 4.6]\label{d2.2}
The normal subgroup $N$ of a group $G$ is a VP subgroup of $G$, if the extension $1 \xrightarrow{} N \xrightarrow{} G \xrightarrow{} G/N \xrightarrow{} 1$ is a VP extension.
\end{definition}
Note that, when $N$ is a marginal subgroup of $G$, $N$ is a VP subgroup if and only if $N\cap T(G)=1$.

\begin{definition}
Let $1\rightarrow N \xrightarrow{\chi} G \xrightarrow{\pi} Q \rightarrow 1$ be a marginal VP extension. This sequence is termed $\mathcal{V}$-stem, whenever $\chi(N)\subseteq V(G)$.
\end{definition}

\begin{definition}
Every $\mathcal{V}$-stem marginal VP extension $1\rightarrow N \xrightarrow{\chi} G \xrightarrow{\pi} Q \rightarrow 1$ with $N\cong \mathcal{V}{\tilde{B_0}}(G)$ is called VP cover and in this case $G$ is said to be a VP covering group of $Q$.
\end{definition}

\begin{definition}[\cite{10}-Definition 2.1]\label{d2.3}
Let $\mathcal{V}$ be a variety of groups defined by the set of laws $V$. Then $\mathcal{V}$ is said to be a Schur-Baer-variety, whenever the marginal factor $\frac{G}{V^*(G)}$ is finite of order $m$, say, then the verbal subgroup $V(G)$ is also finite and $|V(G)| \ | \ m^k$, $k\in \Bbb{N}$, for arbitrary group $G$.
\end{definition}

I. Schur in \cite{14} proved that the variety of abelian groups has the Schur-Baer property. Also, R. Baer in \cite{3} showed that the variety defined by o.c. words has the same property.

\begin{theorem}[\cite{9}-Theorem 1.17]\label{l2.4}
Let $\mathcal{V}$ be a variety defined by the set of laws $V$. Then the following conditions are equivalent
\begin{enumerate}
\item{$\mathcal{V}$ is a Schur-Baer variety,}
\item{for any finite group $G$, the Baer invariant $\mathcal{VM}(G)$ is of order dividing a power of $|G|$.}
\end{enumerate}
\end{theorem}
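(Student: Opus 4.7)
The plan is to prove both directions by exploiting central-by-verbal extensions of the form that give the Baer invariant a concrete meaning. Throughout, I will use that for any group $G$ with free presentation $G \cong F/R$, the Baer invariant in the variety $\mathcal{V}$ can be written as $\mathcal{VM}(G) \cong (R \cap V(F))/[RV^*F]$, where $[RV^*F]$ is the appropriate verbal-marginal commutator subgroup; in particular $\mathcal{VM}(G)$ sits inside the verbal subgroup of the quotient $F/[RV^*F]$ as a central, marginal piece.

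For the implication $(1)\Rightarrow(2)$, I would take a finite group $G$ and pass to the extension
$$1 \longrightarrow \mathcal{VM}(G) \longrightarrow F/[RV^*F] \longrightarrow G \longrightarrow 1,$$
which is $\mathcal{V}$-stem in the sense of the earlier definition: the kernel lies in $V^*(F/[RV^*F]) \cap V(F/[RV^*F])$. Call this middle group $H$. Because $\mathcal{VM}(G) \subseteq V^*(H)$, the quotient $H/V^*(H)$ is a quotient of $G$, hence finite of order at most $|G|$. The Schur-Baer hypothesis applied to $H$ then yields that $V(H)$ is finite and $|V(H)|$ divides $|G|^k$ for some $k\in\mathbb{N}$. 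Since $\mathcal{VM}(G)$ embeds into $V(H)$, its order divides $|G|^k$ as required.

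For $(2)\Rightarrow(1)$, I would start from an arbitrary group $G$ with $Q := G/V^*(G)$ finite of order $m$, choose a free presentation $G=F/R$, and let $S \leq F$ be the preimage of $V^*(G)$, so $Q = F/S$ and $[F:S]=m$. The short exact sequence
$$1 \longrightarrow V(G)\cap V^*(G) \longrightarrow V(G) \longrightarrow V(G)\,V^*(G)/V^*(G) \longrightarrow 1$$
reduces the problem to bounding the two outer terms. The right-hand term is a subgroup of $V(Q)$, and since $Q$ is finite of order $m$ it is bounded by a power of $m$. For the left-hand term, a Hopf-type identification exhibits $V(G)\cap V^*(G) = (V(F)\cap S)R/R$ as a quotient of $\mathcal{VM}(Q) = (V(F)\cap S)/[SV^*F]$, whose order divides a power of $m$ by hypothesis $(2)$. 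Combining the two bounds and using that the class of finite groups of order dividing a power of $m$ is closed under extensions gives the desired bound on $|V(G)|$.

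The main obstacle I expect is the careful verification that the Hopf-type surjection $\mathcal{VM}(Q) \twoheadrightarrow V(G)\cap V^*(G)$ is well defined; this requires that $[SV^*F]$ actually lies in the kernel of the natural map, which is where the precise definition of the verbal-marginal commutator subgroup, and the fact that $V$ consists of outer commutator words (so Baer's theorem applies and the necessary identities hold), enter in an essential way. The direction $(1)\Rightarrow(2)$ is comparatively formal once the $\mathcal{V}$-stem extension is constructed, whereas $(2)\Rightarrow(1)$ is the substantive implication because it converts a statement about finite Baer invariants into a universal finiteness-and-bound statement for arbitrary groups.
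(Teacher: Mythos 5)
This statement is quoted from Leedham-Green and McKay \cite{9} (Theorem 1.17) and is used in the paper as a black box; the paper contains no proof of it, so there is nothing internal to compare your argument against. Judged on its own merits, your proof is essentially the standard one and is correct in substance. In $(2)\Rightarrow(1)$ the decomposition of $V(G)$ along $1\to V(G)\cap V^{*}(G)\to V(G)\to V(G)V^{*}(G)/V^{*}(G)\to 1$ works exactly as you describe: the right-hand term is $V(Q)\leq Q=G/V^{*}(G)$, of order dividing $m$, and the left-hand term is a quotient of $\mathcal{VM}(Q)$ because $[SV^{*}F]\leq R$, which follows directly from the definition of the marginal subgroup for an arbitrary set of laws $V$. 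The worry in your closing paragraph is therefore unfounded: no appeal to outer commutator words is needed for the Hopf-type surjection, and indeed none is available, since the theorem is stated for arbitrary varieties.

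The one genuine slip is in $(1)\Rightarrow(2)$: the kernel of $F/[RV^{*}F]\to G$ is $R/[RV^{*}F]$, not $\mathcal{VM}(G)=(R\cap V(F))/[RV^{*}F]$, so the displayed sequence is not exact and $H=F/[RV^{*}F]$ is not a stem extension of $G$ by $\mathcal{VM}(G)$; producing a genuine stem cover would require finding a complement to $\mathcal{VM}(G)$ inside $R/[RV^{*}F]$, which is a separate (and not entirely trivial) argument. Fortunately your proof never uses exactness. All it needs is that $R/[RV^{*}F]$ is marginal in $H$ (true by the very construction of $[RV^{*}F]$), so that $H/V^{*}(H)$ is a quotient of $G$ and hence finite of order dividing $|G|$, together with the containment $\mathcal{VM}(G)\leq V(H)=V(F)/[RV^{*}F]$. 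With the incorrectly labelled short exact sequence replaced by these two observations, the Schur-Baer hypothesis applied to $H$ gives $|V(H)|$ dividing $|G|^{k}$, and the embedding of $\mathcal{VM}(G)$ into $V(H)$ finishes the direction as you intended.
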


\begin{remark}\label{r2.5}
We know that the $\tilde{B_0}$-invariant $\mathcal{V}{\tilde{B_0}}(G)$ is a subgroup of the Baer invariant $\mathcal{VM}(G)$, so if $\mathcal{V}$ be a Schur-Baer variety, then for any finite group $G$, the $\tilde{B_0}$-invariant $\mathcal{V}{\tilde{B_0}}(G)$ is of order dividing a power of $|G|$.
\end{remark}

\section{\bf{The Main Results}}
In $2015$ U. Jezernik and P. Moravec in \cite{7}, showed that if $G$ is a finite group and $H$ is a subgroup of $G$ of index $n$, then ${{\tilde{B_0}}(G)}^n$ (the nth power of Bogomolov multiplier), is isomorphic to a subgroup of ${\tilde{B_0}}(H)$. In this section we want to generalize above result to the center by center by $w$ variety of groups, where $w$ is any outer commutator word.

\begin{lemma}\label{l3.1}
Let $A$ be a marginal VP subgroup of $H$ i.e. $A\leq V^*(H)$ and $A\cap T(H)=1$. If $G\cong {H}/{A}$, then $V(H)\cap A$ is a homomorphic image of $\mathcal{V}{\tilde{B_0}}(G)$.
\end{lemma}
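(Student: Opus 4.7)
The plan is to build a surjection from $\mathcal{V}\tilde{B_0}(G)$ onto $V(H) \cap A$ by lifting a free presentation of $G$ to $H$ and then transferring the verbal and marginal structure. Fix a free presentation $G \cong F/R$. Since $F$ is free, the composition $F \twoheadrightarrow F/R \cong G \cong H/A$ can be lifted through the quotient map $\pi: H \to H/A$ to a homomorphism $\phi: F \to H$. By construction $\phi(R) \subseteq A$ and $\phi(F) \cdot A = H$.

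The first substantive step is to show $V(H) = \phi(V(F))$. Any element of $H$ has the form $\phi(f)\,a$ with $f \in F$ and $a \in A$, and since $A \subseteq V^*(H)$ the defining property of the marginal subgroup lets us replace each $a_i$ by $1$ inside a verbal value: for every law $v \in V$ of weight $s$,
$$v(\phi(f_1) a_1, \ldots, \phi(f_s) a_s) = v(\phi(f_1), \ldots, \phi(f_s)) = \phi\bigl(v(f_1, \ldots, f_s)\bigr).$$
Therefore $T(H) = \phi(T(F))$, and consequently $V(H) = \langle T(H) \rangle = \phi(V(F))$.

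Next I would construct the map. Restrict $\phi$ to $R \cap V(F)$; its image lies in $A \cap V(H)$ because $\phi(R) \subseteq A$ and $\phi(V(F)) = V(H)$. Surjectivity onto $A \cap V(H)$ follows by writing any such element as $x = \phi(y)$ for some $y \in V(F)$ and noting that $\pi(x) = 1$ forces $y \in R$. It remains to check that $\phi|_{R \cap V(F)}$ kills the denominator $\langle T(F) \cap R \rangle$: for $t = v(f_1, \ldots, f_s) \in T(F) \cap R$ we have $\phi(t) = v(\phi(f_1), \ldots, \phi(f_s)) \in T(H)$ and also $\phi(t) \in A$, hence $\phi(t) \in T(H) \cap A = 1$ by the VP hypothesis. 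Thus $\phi$ descends to a surjection $\mathcal{V}\tilde{B_0}(G) = (R \cap V(F))/\langle T(F) \cap R \rangle \twoheadrightarrow V(H) \cap A$.

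The main obstacle is the identity $V(H) = \phi(V(F))$: one cannot hope for $\phi$ itself to be surjective in general, so the argument must exploit marginality element-wise inside each verbal value, absorbing the $A$-part of each argument into $1$. Once this is in hand, the kernel computation becomes immediate from the hypothesis $A \cap T(H) = 1$.
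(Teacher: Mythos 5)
Your proof is correct, but it takes a genuinely different route from the paper's. The paper presents $H$ (not $G$) as $F/R$, writes $A=K/R$ so that $G\cong F/K$, computes $V(H)\cap A=\frac{V(F)\cap K}{V(F)\cap R}$ by Dedekind's law and the isomorphism theorems, and then observes that the VP condition forces $\langle T(F)\cap K\rangle\subseteq R$, exhibiting $V(H)\cap A$ as a quotient of $\mathcal{V}\tilde{B_0}(G)=\frac{K\cap V(F)}{\langle T(F)\cap K\rangle}$; notably, the marginality hypothesis $A\leq V^*(H)$ plays no explicit role there. You instead present $G$ as $F/R$, lift to $\phi:F\to H$, and use marginality element-wise to get $T(H)=\phi(T(F))$ and hence $V(H)=\phi(V(F))$ --- this is exactly the point where your argument compensates for $\phi$ not being surjective onto $H$ --- after which the VP condition $A\cap T(H)=1$ kills $\langle T(F)\cap R\rangle$ and yields an explicit epimorphism onto $V(H)\cap A$. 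Your version costs the extra marginality argument but makes the homomorphism concrete and works directly from a presentation of $G$ (implicitly relying on the presentation-independence of $\mathcal{V}\tilde{B_0}$, which the paper establishes in the preliminaries); the paper's version is a shorter subgroup-lattice computation but requires choosing the presentation ``from above,'' i.e., of $H$. Both are valid proofs of the lemma.
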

\begin{proof}
Let $H\cong {F}/{R}$ be a free presentation of $H$. Then for some subgroup $K$ of $F$, we have $A\cong {K}/{R}$. Hence $G\cong {F}/{K}$ and by using the definition
$$\mathcal{V}{\tilde{B_0}}(G)=\frac{K\cap V(F)}{<T(F)\cap K>} \ \ ; \ \ T(F)=\{v(f_1,...,f_s) \ | \ v\in V , f_i\in F ; 1\leq i\leq s\}$$
and Dedekind's law, we have
\begin{align*}
V(H)\cap A &=V(\frac{F}{R})\cap \frac{K}{R}=\frac{V(F)R}{R}\cap \frac{K}{R}=\frac{V(F)R\cap K}{R}\\
&=\frac{(V(F)\cap K)R}{R}\cong \frac{V(F)\cap K}{(V(F)\cap K)\cap R}=\frac{V(F)\cap K}{V(F)\cap R}.
\end{align*}

Since ${K}/{R}$ is a marginal VP subgroup of ${F}/{R}$, by using \ref{d2.2}, $({K}/{R})\cap T({F}/{R})=1$. So $<T(F)\cap K>\subseteq R$. Thus we have
\begin{align*}
V(H)\cap A =\frac{V(F)\cap K}{V(F)\cap R}\cong \frac{\frac{V(F)\cap K }{<T(F)\cap K >}}{\frac{V(F)\cap R }{<T(F)\cap K>}}=\frac{\mathcal{V}{\tilde{B_0}}(G)}{\frac{V(F)\cap R}{<T(F)\cap K>}}.
\end{align*}
Therefore the result holds.
\end{proof}
\begin{corollary}\label{c3.2}
Let $\mathcal{V}$ be a Schur-Baer variety and $G$ be a finite group. If $H$ be a group with a marginal VP subgroup $A$, such that ${H}/{A}\cong G$. Then $V(H)\cap A$ is isomorphic to a subgroup of $\mathcal{V}{\tilde{B_0}}(G)$.
\end{corollary}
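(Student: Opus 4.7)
The plan is to deduce the corollary directly from Lemma \ref{l3.1} by invoking finiteness coming from the Schur--Baer hypothesis, and then converting the surjection supplied by the lemma into an embedding in the opposite direction. The main ingredients are already assembled earlier in the paper: Lemma \ref{l3.1} gives a surjective homomorphism $\mathcal{V}\tilde{B_0}(G) \twoheadrightarrow V(H)\cap A$, the paper records that $\mathcal{V}\tilde{B_0}(G)$ is always abelian, and Remark \ref{r2.5} guarantees that under the Schur--Baer assumption this invariant is finite when $G$ is finite. Combining these facts reduces the corollary to a purely structural statement about finite abelian groups.

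First I would apply Lemma \ref{l3.1} to the hypothesized marginal VP subgroup $A$ of $H$ with quotient $H/A\cong G$; this immediately produces an epimorphism
\[
\varphi\colon \mathcal{V}\tilde{B_0}(G) \twoheadrightarrow V(H)\cap A.
\]
Next I would observe that $\mathcal{V}\tilde{B_0}(G)$ is abelian (as noted after the definition of the $\tilde{B_0}$-invariant) and, because $\mathcal{V}$ is Schur--Baer and $G$ is finite, Remark \ref{r2.5} tells us $|\mathcal{V}\tilde{B_0}(G)|$ divides a power of $|G|$; in particular it is finite. Consequently, $V(H)\cap A$ is a quotient of a finite abelian group and is therefore itself a finite abelian group.

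The final step is to convert the surjection $\varphi$ into an injection going the other way. Since $\mathcal{V}\tilde{B_0}(G)$ is a finite abelian group and $V(H)\cap A$ is one of its quotients, the structure theorem for finite abelian groups (equivalently, Pontryagin duality, under which subgroups and quotients are interchanged and a finite abelian group is canonically self-dual) ensures that $V(H)\cap A$ is isomorphic to a subgroup of $\mathcal{V}\tilde{B_0}(G)$, which is exactly the conclusion.

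No step is genuinely hard: the only place where care is needed is the last passage from ``quotient'' to ``subgroup,'' which relies crucially on having reduced to the finite abelian setting; this is precisely why the Schur--Baer hypothesis together with the finiteness of $G$ is essential and cannot be dropped. All other parts are direct applications of results already established in the excerpt.
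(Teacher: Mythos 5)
Your proposal is correct and follows essentially the same route as the paper: apply Lemma \ref{l3.1} to obtain the epimorphism $\mathcal{V}\tilde{B_0}(G)\twoheadrightarrow V(H)\cap A$, use Remark \ref{r2.5} together with the Schur--Baer hypothesis to conclude that $\mathcal{V}\tilde{B_0}(G)$ is a finite abelian group, and then invoke the standard fact that every quotient of a finite abelian group embeds as a subgroup. The only difference is that you spell out the duality argument behind that last fact, which the paper leaves implicit.
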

\begin{proof}
Using Lemma \ref{l3.1}, $V(H)\cap A$ is a homomorphic image of $\tilde{B_0}$-invariant $\mathcal{V}{\tilde{B_0}}(G)$. Also, Remark \ref{r2.5} and Definition $3.1$ in \cite{1} show that $\mathcal{V}{\tilde{B_0}}(G)$ is a finite abelian group. Hence according to the property of finite abelian groups, $V(H)\cap A$ is isomorphic to a subgroup of $\mathcal{V}{\tilde{B_0}}(G)$.
\end{proof}
Let $\mathcal{W}$ be a variety defined by an outer commutator word $w=w(x_1,...,x_r)$ and $\mathcal{V}$ be the center by center by $\mathcal{W}$ variety of groups. We can see that the variety $\mathcal{V}$ is defined by the commutator word $[w,x_1,x_2]$. Here, we work with this variety.
\begin{lemma}[\cite{10}-Lemma 3.5]\label{l3.3}
Let $B$ a subgroup of a group $E$ with finite index $n$ and $\mathcal{V}$ be the some variety of groups same as before. If $x\in V^*(E)\cap V(E)$ then for all $y_1,...,y_r\in B$, $[w(y_1,...,y_r),x^n]\in V(B)$. In particular, if $L\subseteq V^*(E)\cap V(E)$, then $[W(B),L^n]\subseteq V(B)$.
\end{lemma}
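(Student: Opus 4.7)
The plan is to first collapse the commutator $[u, x^n]$ to $[u, x]^n$ using the marginal properties of $x$, and then push $[u, x]^n$ into $V(B)$ via a transversal argument, where I set $u := w(y_1, \ldots, y_r) \in W(B) \subseteq W(E)$.

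For the first stage, I would exploit that $x \in V^*(E)$. Marginality applied to the outermost argument of the defining law $[w, x_1, x_2]$ of $\mathcal{V}$ forces $[x, [W(E), E]] = 1$; that is, $x$ centralizes $[W(E), E]$. Since $u \in W(E)$, we have $[u, x] \in [W(E), E]$, hence $[u, x]$ commutes with $x$. The standard iterated identity $[u, x^n] = \prod_{i=0}^{n-1} [u, x]^{x^i}$ then collapses to $[u, x^n] = [u, x]^n$.

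For the second stage, I would show $[u, x]^n \in V(B)$. Normality of $V(E)$ together with $x \in V(E)$ gives $[u, x] \in V(E) = [W(E), E, E]$. Fix a right transversal $\{1 = e_1, e_2, \ldots, e_n\}$ for $B$ in $E$, so every element of $E$ is uniquely $b \cdot e_i$ with $b \in B$. I would expand each generator $[w'(f_1, \ldots, f_r), f_{r+1}, f_{r+2}]$ of $V(E)$ occurring in a factorization of $[u, x]$ by this Schreier rewriting, use multilinearity of commutators modulo deeper terms to separate a ``$B$-pure'' contribution (already lying in $V(B)$) from ``transversal'' contributions involving the $e_i$, and then invoke the marginal conditions of $x$ in the remaining positions of $[w, x_1, x_2]$ to fold those transversal contributions into $V(B)$ after raising to the $n$-th power; the exponent $n$ supplies precisely one factor per transversal element, as in a transfer.

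The hard part will be the second stage. Stage 1 is a short commutator calculation from a single marginal identity, whereas Stage 2 requires careful bookkeeping of the multilinear expansion of the outer commutator word $w$ combined with a transfer-style summation over the chosen transversal; the outer-commutator hypothesis on $w$ is what keeps this bookkeeping tractable. Finally, the ``in particular'' statement $[W(B), L^n] \subseteq V(B)$ follows from the main assertion by writing each element of $W(B)$ as a product of $w$-values, applying $[ab, c] = [a, c]^b [b, c]$, and using normality of $V(B)$ in $B$.
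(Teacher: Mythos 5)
The paper itself gives no proof of this lemma; it is quoted verbatim from \cite{10} (Lemma 3.5 of Moghaddam--Mashayekhy--Kayvanfar), so your proposal has to be measured against the transfer-type argument in that source. Your Stage 1 is correct and is indeed the standard opening move: marginality of $x$ in the last variable of $[w,x_1,x_2]$ (substitute $x$ and set $g_{r+2}=1$) gives $[[W(E),E],x]=1$, hence $[u,x]$ commutes with $x$ and $[u,x^n]=[u,x]^n$. Note, though, that marginality in the variable $x_{r+1}$ (with $g_{r+1}=1$) gives the stronger fact $[u',x]\in Z(E)$ for every $w$-value $u'$ of $E$, and it is this fact, not merely $[x,[W(E),E]]=1$, that powers the second half. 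Your reduction of the ``in particular'' clause to the generator case via $[ab,c]=[a,c]^b[b,c]$ and normality of $V(B)$ is also fine.

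The gap is Stage 2, which is the entire content of the lemma and which you have only outlined, not proved; moreover the outline, as written, would not go through. First, the only use you make of the hypothesis $x\in V(E)$ is the remark that $[u,x]\in V(E)$, which is true but inert. In the actual argument the hypothesis $x\in V(E)$ is consumed in an essential way: one builds a transfer-like map $\tau_u$ from a transversal of $B$ in $E$, with values in $[W(B),B]V(B)/V(B)$, well defined because $[u',x]\in Z(E)$; this map vanishes on $V(E)$, while a direct cycle-by-cycle computation using the marginal identities evaluates $\tau_u(x)$ as $[u,x]^n V(B)$, and comparing the two answers yields $[u,x]^n\in V(B)$. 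Second, your proposed mechanism --- factor $[u,x]$ into generators of $V(E)$, Schreier-rewrite, split off a ``$B$-pure'' part, and then claim that raising to the $n$-th power ``supplies one factor per transversal element'' --- has the logic of the transfer backwards: in a transfer argument the exponent $n$ arises from summing the images of a central element over the $n$ cosets permuted by $x$, not from distributing the $n$ copies of a fixed $n$-th power over the transversal. Nothing in your sketch justifies why the ``transversal contributions'' should fold into $V(B)$ after taking $n$-th powers. So the proposal settles the easy reduction $[u,x^n]=[u,x]^n$ but leaves the substantive claim unestablished.
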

Now we want to prove the main result of this section.

\begin{theorem}\label{t3.4}
Let $\mathcal{V}$ be the variety of groups is defined by the commutator word $[w,x_1,x_2]$ same as before and $G$ be a finite group with a VP cover $1\rightarrow L \rightarrow G^* \rightarrow G \rightarrow 1$. If $H$ be a subgroup of index $n$ in $G$ and for some subgroup $B$ of $G^*$, $H\cong \frac{B}{L}$, then $[W(B),\mathcal{V}{\tilde{B_0}}(G)^n]$ is isomorphic to a subgroup of $\mathcal{V}{\tilde{B_0}}(H)$.
\end{theorem}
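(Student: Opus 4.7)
\medskip

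The plan is to use Lemma \ref{l3.3} to show that $[W(B),L^n]$ sits inside $V(B)$, and then to use Corollary \ref{c3.2} to embed $V(B)\cap L$ into $\mathcal{V}\tilde{B_0}(H)$. Since the VP cover identifies $L$ with $\mathcal{V}\tilde{B_0}(G)$, this gives the desired embedding of $[W(B),\mathcal{V}\tilde{B_0}(G)^n]$.

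More concretely, first I would record the basic facts coming from the VP cover hypothesis: $L\cong \mathcal{V}\tilde{B_0}(G)$, $L\subseteq V^{*}(G^{*})\cap V(G^{*})$, and $L\cap T(G^{*})=1$. Since $B/L\cong H$ and $G^{*}/L\cong G$, the index of $B$ in $G^{*}$ equals the index of $H$ in $G$, i.e.\ $n$. In particular, Lemma \ref{l3.3} applies with $E=G^{*}$ and yields $[W(B),L^{n}]\subseteq V(B)$. Moreover, since $L$ is normal in $G^{*}$, we have $[W(B),L^{n}]\subseteq L$, so altogether $[W(B),L^{n}]\subseteq V(B)\cap L$.

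Next I would check that $L$ is a marginal VP subgroup of $B$, so that Corollary \ref{c3.2} can be invoked with $H\cong B/L$. The marginal condition $L\subseteq V^{*}(B)$ follows from $L\subseteq V^{*}(G^{*})\cap B\subseteq V^{*}(B)$ (the defining marginal identity over all of $G^{*}$ restricts to the same identity over $B$). The VP condition $L\cap T(B)=1$ follows from $T(B)\subseteq T(G^{*})$ together with $L\cap T(G^{*})=1$. Since the variety $\mathcal{V}$ defined by the outer commutator word $[w,x_1,x_2]$ is a Schur–Baer variety (by Baer's theorem cited before Theorem \ref{l2.4}) and $H$ is finite, Corollary \ref{c3.2} gives an embedding $V(B)\cap L\hookrightarrow \mathcal{V}\tilde{B_0}(H)$. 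Composing with the inclusion $[W(B),L^{n}]\subseteq V(B)\cap L$ from the previous step, and using $L\cong \mathcal{V}\tilde{B_0}(G)$ (which translates $L^{n}$ to $\mathcal{V}\tilde{B_0}(G)^{n}$), finishes the argument.

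The main obstacle is the verification that $L$ remains a marginal VP subgroup after restricting the ambient group from $G^{*}$ to $B$; this is the step where the hypothesis that the original extension is a VP cover (and not merely a central extension) is essential, because both the marginality and the triviality of $L\cap T(-)$ must descend to the subgroup $B$. Once that descent is in place, the two cited results (Lemma \ref{l3.3} producing the inclusion into $V(B)$, and Corollary \ref{c3.2} producing the embedding into $\mathcal{V}\tilde{B_0}(H)$) fit together mechanically, and the finiteness of $H$ together with the Schur–Baer property of $\mathcal{V}$ ensures the final passage from ``homomorphic image'' to ``isomorphic to a subgroup.''
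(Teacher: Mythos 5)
Your proposal is correct and follows essentially the same route as the paper's own proof: apply Lemma \ref{l3.3} with $E=G^{*}$ to get $[W(B),L^{n}]\subseteq V(B)$, verify that $L$ is a marginal VP subgroup of $B$, and invoke Corollary \ref{c3.2} to embed $V(B)\cap L$ into $\mathcal{V}\tilde{B_0}(H)$. You in fact supply slightly more detail than the paper does (e.g.\ the justification that $[W(B),L^{n}]\subseteq L$ via normality and that marginality and $L\cap T(-)=1$ descend from $G^{*}$ to $B$), which the paper asserts without comment.
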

\begin{proof}
We know that, $L\cong \mathcal{V}{\tilde{B_0}}(G)$, $L\leq V(G^*)\cap V^*(G^*)$ and $L\cap T(G^*)=1$. Since $[H:G]=n$, so $[G^*:B]=n$. Also by using Lemma \ref{l3.3}, $[W(B),L^n]\subseteq V(B)$. On the other hand, we have $L\subseteq V^*(B)$ and $L\cap T(B)=1$. So $L$ is a marginal VP subgroup in $B$. Also, since the variety $\mathcal{V}$ has the Schur-Baer property, Corollary \ref{c3.2} show that $L\cap V(B)$ is isomorphic to a subgroup of $\mathcal{V}{\tilde{B_0}}(H)$ and $[W(B),\mathcal{V}{\tilde{B_0}}(G)^n]\subseteq L\cap V(B)$. Hence $[W(B),\mathcal{V}{\tilde{B_0}}(G)^n]$ is isomorphic to a subgroup of $\mathcal{V}{\tilde{B_0}}(H)$.
\end{proof}
Now introduce the following corollaries, which generalize the Proposition $6.2$ in \cite{7} and Lemma $2.6$ in \cite{5}.
\begin{corollary}\label{c3.5}
Let $G$ be a finite group and $G_p$ be a sylow p-subgroup of $G$. If $G$ has a VP cover $1\rightarrow L\rightarrow G^* \rightarrow G \rightarrow 1$ with $G_p \cong {B}/{L}$. Then $[W(B),\mathcal{V}{\tilde{B_0}}(G)_p]$ is isomorphic to a subgroup of $\mathcal{V}{\tilde{B_0}}(G_p)$. 
\end{corollary}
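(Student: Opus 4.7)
The plan is to specialize Theorem \ref{t3.4} to the case $H = G_p$, whose index $n = [G : G_p]$ is coprime to $p$ by the defining property of a Sylow $p$-subgroup. With the VP cover $1 \to L \to G^* \to G \to 1$ and the preimage $B$ such that $G_p \cong B/L$, Theorem \ref{t3.4} gives directly that $[W(B), \mathcal{V}\tilde{B_0}(G)^n]$ is isomorphic to a subgroup of $\mathcal{V}\tilde{B_0}(G_p)$. Hence the only remaining task is to show that the $p$-primary component $\mathcal{V}\tilde{B_0}(G)_p$ is contained in the $n$-th power $\mathcal{V}\tilde{B_0}(G)^n$.

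To establish this inclusion, I would first invoke the Schur--Baer property of $\mathcal{V}$: since the defining law $[w,x_1,x_2]$ is an outer commutator word, Baer's theorem combined with Theorem \ref{l2.4} and Remark \ref{r2.5} ensures that $\mathcal{V}\tilde{B_0}(G)$ is a finite abelian group. Writing its primary decomposition $\mathcal{V}\tilde{B_0}(G) = \mathcal{V}\tilde{B_0}(G)_p \times M$, where $M$ is the complementary $p'$-part, the coprimality $\gcd(n,p)=1$ forces $x \mapsto x^n$ to be an automorphism of the finite $p$-group $\mathcal{V}\tilde{B_0}(G)_p$. Consequently $(\mathcal{V}\tilde{B_0}(G)_p)^n = \mathcal{V}\tilde{B_0}(G)_p$, and therefore $\mathcal{V}\tilde{B_0}(G)_p \subseteq \mathcal{V}\tilde{B_0}(G)^n$.

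Combining the two observations, monotonicity of the commutator subgroup in its second argument yields
$$[W(B), \mathcal{V}\tilde{B_0}(G)_p] \subseteq [W(B), \mathcal{V}\tilde{B_0}(G)^n],$$
and Theorem \ref{t3.4} then embeds the right-hand side into $\mathcal{V}\tilde{B_0}(G_p)$, giving the claim. I do not anticipate any genuine obstacle here: the argument is the classical Sylow-theoretic observation that an $n$-th power statement with $n$ coprime to $p$ automatically captures the entire $p$-primary component. The only subtle point worth checking is that the isomorphism $L \cong \mathcal{V}\tilde{B_0}(G)$ transports the primary decomposition faithfully, which is automatic for isomorphisms between finite abelian groups, so that the commutator $[W(B), \mathcal{V}\tilde{B_0}(G)_p]$ is well-defined as a subgroup of $B$ via the identification with $[W(B), L_p]$.
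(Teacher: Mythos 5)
Your proposal is correct and follows essentially the same route as the paper: specialize Theorem \ref{t3.4} to $H=G_p$ with $n=[G:G_p]$ coprime to $p$, use the Schur--Baer property to get finiteness of $\mathcal{V}\tilde{B_0}(G)$, and observe that raising to the $n$-th power is bijective on the $p$-primary component. In fact your version is slightly more careful than the paper's, which asserts the equality $\mathcal{V}\tilde{B_0}(G)^n=\mathcal{V}\tilde{B_0}(G)_p$ (not true in general, since the $p'$-part need not have exponent dividing $n$), whereas you correctly use only the inclusion $\mathcal{V}\tilde{B_0}(G)_p\subseteq \mathcal{V}\tilde{B_0}(G)^n$, which is all the argument requires.
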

\begin{proof}
Let $G$ be a finite group of order ${p^{\alpha}}n$, such that $(p,n)=1$. Then $[G,G_p]=n$ and by using Theorem \ref{l2.4}, $\mathcal{V}{\tilde{B_0}}(G)$ has an order dividing a power of $|G|={p^{\alpha}}n$. So $\mathcal{V}{\tilde{B_0}}(G)^n=\mathcal{V}{\tilde{B_0}}(G)_p$. Hence the proof is completed by using Theorem \ref{t3.4}.
\end{proof}

\begin{corollary}\label{c3.6}
Based on the assumptions and the notations of theorem \ref{t3.4}, if $\mathcal{V}{\tilde{B_0}}(G)^{[n]}$ be the set of all elements of $\mathcal{V}{\tilde{B_0}}(G)$ of order coprime to $n$. Then $[W(B),\mathcal{V}{\tilde{B_0}}(G)^{[n]}]$ is isomorphic to a subgroup of $\mathcal{V}{\tilde{B_0}}(H)$.
\end{corollary}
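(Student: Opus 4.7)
The plan is to reduce Corollary \ref{c3.6} directly to Theorem \ref{t3.4} using a simple observation about the $n$-coprime part of a finite abelian group. First, note that $\mathcal{V}\tilde{B_0}(G)$ is a finite abelian group by Remark \ref{r2.5} (since $\mathcal{V}$, being the center by center by $\mathcal{W}$ variety defined by the outer commutator word $[w,x_1,x_2]$, is a Schur-Baer variety). Consequently $\mathcal{V}\tilde{B_0}(G)^{[n]}$, the set of elements of order coprime to $n$, is in fact a subgroup of $\mathcal{V}\tilde{B_0}(G)$, and its order is coprime to $n$.

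The key step is to show that $\mathcal{V}\tilde{B_0}(G)^{[n]} \subseteq \mathcal{V}\tilde{B_0}(G)^n$. Since $\gcd(n, |\mathcal{V}\tilde{B_0}(G)^{[n]}|)=1$, the map $x \mapsto x^n$ is a bijection of the finite abelian group $\mathcal{V}\tilde{B_0}(G)^{[n]}$ onto itself. Hence every element of $\mathcal{V}\tilde{B_0}(G)^{[n]}$ can be written as an $n$-th power (of an element of $\mathcal{V}\tilde{B_0}(G)^{[n]}$, hence of $\mathcal{V}\tilde{B_0}(G)$), so the inclusion $\mathcal{V}\tilde{B_0}(G)^{[n]} \subseteq \mathcal{V}\tilde{B_0}(G)^n$ holds. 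Taking commutators with $W(B)$ preserves inclusions, yielding
$$[W(B),\,\mathcal{V}\tilde{B_0}(G)^{[n]}] \ \subseteq\ [W(B),\,\mathcal{V}\tilde{B_0}(G)^n].$$

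Finally, apply Theorem \ref{t3.4}: under the same hypotheses on $G$, $H$, $G^*$, $L$, and $B$, the subgroup $[W(B),\,\mathcal{V}\tilde{B_0}(G)^n]$ is isomorphic to a subgroup of $\mathcal{V}\tilde{B_0}(H)$. Combined with the inclusion above, we conclude that $[W(B),\,\mathcal{V}\tilde{B_0}(G)^{[n]}]$ embeds into $\mathcal{V}\tilde{B_0}(H)$ as well, completing the proof. There is no substantive obstacle here; the argument is a straightforward specialization of Theorem \ref{t3.4}, with the only nontrivial ingredient being the standard fact that the $n$-th power map is bijective on any finite abelian group whose order is coprime to $n$.
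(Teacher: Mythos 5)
Your proof is correct and follows essentially the same route as the paper, which simply observes that $\mathcal{V}\tilde{B_0}(G)^{[n]}$ is a subgroup of $\mathcal{V}\tilde{B_0}(G)^{n}$ and then invokes Theorem \ref{t3.4}. You merely supply the details the paper leaves implicit (the bijectivity of the $n$-th power map on the coprime part and the monotonicity of commutator subgroups), so there is nothing substantively different to compare.
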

\begin{proof}
We know that $G^{[n]}$ is a subgroup of $G^n$. Thus the result follows by using Theorem \ref{t3.4}.
\end{proof}

\begin{corollary}\label{c3.7}
Based on the assumptions and the notations of theorem \ref{t3.4},
let $d$ and $e$ be the exponents of $[W(B),\mathcal{V}{\tilde{B_0}}(G)]$ and $\mathcal{V}{\tilde{B_0}}(H)$, respectively. Then $d$ divides $ne$. In particular, $d$ divides $n$ whenever $H$ is cyclic subgroup of $G$.
\begin{proof}
According to the assumption, $\mathcal{V}{\tilde{B_0}}(G)\cong L\leq V^*(G^*)$, $L\leq B$ and $\mathcal{V}{\tilde{B_0}}(G)\leq V^*(B)$. So the Turner-Smith's Theorem \cite{15} shows that
$$[W(B),\mathcal{V}{\tilde{B_0}}(G)^n]=[W(B),\mathcal{V}{\tilde{B_0}}(G)]^n.$$
Therefore by using Theorem \ref{t3.4}, we have
$$[W(B),\mathcal{V}{\tilde{B_0}}(G)]^n=[W(B),\mathcal{V}{\tilde{B_0}}(G)^n]\leq \mathcal{V}{\tilde{B_0}}(H)$$
Hence, $[W(B),\mathcal{V}{\tilde{B_0}}(G)]^{ne}\leq {\mathcal{V}{\tilde{B_0}}(H)}^e=1$, and so $d$ divides $ne$.
\end{proof}

\end{corollary}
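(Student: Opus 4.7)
The plan is to derive the divisibility statement purely from the subgroup embedding delivered by Theorem \ref{t3.4}, combined with an exponent trick and a Turner-Smith-type commutator-power identity. First I would invoke Theorem \ref{t3.4}, which, up to the identification $L\cong \mathcal{V}\tilde{B_0}(G)$, produces an injective embedding of $[W(B),\mathcal{V}\tilde{B_0}(G)^n]$ as a subgroup of $\mathcal{V}\tilde{B_0}(H)$. By itself this does not bound the exponent $d$ of $[W(B),\mathcal{V}\tilde{B_0}(G)]$, because in the conclusion of Theorem \ref{t3.4} the power $n$ sits inside the commutator bracket; the task is to pull it out.

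Next I would apply Turner-Smith's theorem (\cite{15}) to move the $n$-th power past the commutator. The hypotheses of that theorem are met here: from the VP-cover setup we have $\mathcal{V}\tilde{B_0}(G)\cong L\subseteq V^{*}(G^{*})\cap V(G^{*})$, and since $L\subseteq B\leq G^{*}$ the inclusion $L\subseteq V^{*}(B)$ passes to $B$ automatically. Turner-Smith then yields
\[
[W(B),\mathcal{V}\tilde{B_0}(G)^n] \;=\; [W(B),\mathcal{V}\tilde{B_0}(G)]^n,
\]
so combining this equality with Theorem \ref{t3.4} gives the pivotal inclusion $[W(B),\mathcal{V}\tilde{B_0}(G)]^n \hookrightarrow \mathcal{V}\tilde{B_0}(H)$ as a subgroup.

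The exponent bound is then a one-line manipulation: raising the inclusion to the $e$-th power, the image lies in $\mathcal{V}\tilde{B_0}(H)^e=1$ by definition of $e$, and because the embedding from Theorem \ref{t3.4} is injective, this forces $[W(B),\mathcal{V}\tilde{B_0}(G)]^{ne}=1$ in the source, i.e.\ $d\mid ne$. For the "in particular" clause, I would note that when $H$ is cyclic it admits a free presentation $F/R$ with $F$ infinite cyclic; since $F$ is abelian and $\mathcal{V}$ (being center-by-center-by-$\mathcal{W}$) contains every abelian group, $V(F)=1$, so the defining formula gives $\mathcal{V}\tilde{B_0}(H)=(R\cap V(F))/\langle T(F)\cap R\rangle =1$. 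Hence $e=1$ and $d\mid n$.

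The main obstacle I expect is cleanly verifying the precise form of Turner-Smith's identity for the specific outer commutator word $[w,x_1,x_2]$ defining $\mathcal{V}$: one needs to check that the marginality $L\subseteq V^{*}(B)$ is exactly the condition under which $[W(B),L^n]$ collapses to $[W(B),L]^n$, rather than merely sitting inside it. Once that identity is secured, the remainder of the argument is the two-line exponent calculation above, and the cyclic case follows from the triviality of $V(F)$ on a cyclic free group.
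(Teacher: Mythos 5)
Your proposal is correct and follows essentially the same route as the paper's proof: Turner--Smith's theorem (justified by $L\subseteq V^{*}(G^{*})$ and hence $L\subseteq V^{*}(B)$) converts $[W(B),\mathcal{V}\tilde{B_0}(G)^n]$ into $[W(B),\mathcal{V}\tilde{B_0}(G)]^n$, Theorem \ref{t3.4} embeds this into $\mathcal{V}\tilde{B_0}(H)$, and raising to the $e$-th power gives $d\mid ne$. You also supply an explicit argument for the cyclic case (that $V(F)=1$ for $F$ infinite cyclic forces $\mathcal{V}\tilde{B_0}(H)=1$, so $e=1$), a detail the paper leaves unstated.
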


In $2002$ F. Bogomolov [\cite{5}-Lemma 2.6] proved that if $G$ is a finite group and $G_p$ is a sylow p-subgroup of $G$, then the sylow $p$-subgroup of the Bogomolov multiplier of $G$ $(i.e.\ \tilde{B_0}(G)_p)$ is isomorphic to a subgroup of the Bogomolov multiplier of its sylow $p$-subgroup $\tilde{B_0}(G_p)$. Now we want to generalize this problem for an arbitrary variety.

\begin{lemma}\label{l3.8}
Let $G={H_1}\times {H_2}$ be the direct product of two finite $p$ and $q$-groups $H_1$ and $H_2$, respectively. Then $\mathcal{V}\tilde{B_0}(G)_p\cong \mathcal{V}\tilde{B_0}(G_p)$.
\end{lemma}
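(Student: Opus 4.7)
The plan is to reduce the statement to two ingredients already established in the paper: the direct-product formula for the $\tilde{B_0}$-invariant (Proposition \ref{p2.1}) and the Schur--Baer property of the variety $\mathcal{V}$ defined by $[w,x_1,x_2]$ (which is an outer commutator word since $w$ is one, so the variety is Schur--Baer by Baer's theorem, as recalled in the paper).

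First I would apply Proposition \ref{p2.1} to the decomposition $G=H_1\times H_2$ to obtain
\[
\mathcal{V}\tilde{B_0}(G)\;\cong\;\mathcal{V}\tilde{B_0}(H_1)\times\mathcal{V}\tilde{B_0}(H_2).
\]
Next, since $\mathcal{V}$ is a Schur--Baer variety, Remark \ref{r2.5} tells us that $|\mathcal{V}\tilde{B_0}(H_1)|$ divides a power of $|H_1|$ and $|\mathcal{V}\tilde{B_0}(H_2)|$ divides a power of $|H_2|$. Because $H_1$ is a $p$-group and $H_2$ is a $q$-group (with $p\neq q$), this forces $\mathcal{V}\tilde{B_0}(H_1)$ to be a (finite abelian) $p$-group and $\mathcal{V}\tilde{B_0}(H_2)$ to be a $q$-group.

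Finally, in the direct-product decomposition above, the Sylow $p$-subgroup of $\mathcal{V}\tilde{B_0}(G)$ must coincide with the first factor, since the second factor contains no nontrivial $p$-elements. Using that $G_p=H_1$ (the unique Sylow $p$-subgroup of $G$), this gives
\[
\mathcal{V}\tilde{B_0}(G)_p\;\cong\;\mathcal{V}\tilde{B_0}(H_1)\;=\;\mathcal{V}\tilde{B_0}(G_p),
\]
which is the desired conclusion. There is no real obstacle here; the only point that requires a moment's care is verifying that the variety defined by $[w,x_1,x_2]$ is indeed a Schur--Baer variety, which follows from Baer's result cited in the preliminaries since $[w,x_1,x_2]$ is an outer commutator word whenever $w$ is.
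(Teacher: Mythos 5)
Your proposal is correct and follows essentially the same route as the paper's own proof: apply Proposition \ref{p2.1} to split $\mathcal{V}\tilde{B_0}(H_1\times H_2)$, use Remark \ref{r2.5} to see that the two factors are a $p$-group and a $q$-group respectively, and identify the Sylow $p$-part with $\mathcal{V}\tilde{B_0}(H_1)=\mathcal{V}\tilde{B_0}(G_p)$. Your extra remark justifying the Schur--Baer property of the variety via Baer's theorem is a welcome addition that the paper leaves implicit.
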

\begin{proof}
We know $\mathcal{V}{\tilde{B_0}}(G_p)=\mathcal{V}{\tilde{B_0}}(H_1)$. By using Proposition \ref{p2.1}, we have
$$\mathcal{V}{\tilde{B_0}}({H_1}\times {H_2})\cong \mathcal{V}{\tilde{B_0}}(H_1)\oplus \mathcal{V}{\tilde{B_0}}(H_2).$$
Now by Remark \ref{r2.5}, $\mathcal{V}{\tilde{B_0}}(H_i)$ is a $p$-group and $q$-group for $i=1,2$, respectively. Hence $\mathcal{V}{\tilde{B_0}}(G)_p\cong \mathcal{V}{\tilde{B_0}}(H_1)=\mathcal{V}{\tilde{B_0}}(G_p)$.
\end{proof}

\begin{lemma}\label{l3.9}
Let $G$ be a finite nilpotent group with a sylow $p$-subgroup $G_p$. Then $\mathcal{V}{\tilde{B_0}}(G)_p\cong \mathcal{V}{\tilde{B_0}}(G_p)$.
\end{lemma}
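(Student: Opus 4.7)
The plan is to reduce the statement to Lemma \ref{l3.8} by exploiting the standard structure theorem for finite nilpotent groups, namely that every finite nilpotent group is the internal direct product of its Sylow subgroups. Writing $|G|=p_1^{\alpha_1}\cdots p_k^{\alpha_k}$ with $p=p_1$, we have $G \cong G_{p_1}\times G_{p_2}\times\cdots\times G_{p_k}$, where $G_{p_i}$ is the (unique) Sylow $p_i$-subgroup of $G$.

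I would then proceed by induction on $k$, the number of distinct prime divisors of $|G|$. The base case $k=1$ is trivial, since then $G=G_p$. For $k=2$ the statement is precisely Lemma \ref{l3.8}. For the inductive step, write $G\cong G_p \times H$, where $H=G_{p_2}\times\cdots\times G_{p_k}$ is a finite nilpotent group of order coprime to $p$. Applying Proposition \ref{p2.1} gives
\[
\mathcal{V}\tilde{B_0}(G)\cong \mathcal{V}\tilde{B_0}(G_p)\oplus \mathcal{V}\tilde{B_0}(H).
\]
By Remark \ref{r2.5}, since $\mathcal{V}$ is a Schur-Baer variety (the variety defined by the outer commutator word $[w,x_1,x_2]$), the order of $\mathcal{V}\tilde{B_0}(G_p)$ divides a power of $|G_p|$, hence is a $p$-group, and similarly $\mathcal{V}\tilde{B_0}(H)$ has order dividing a power of $|H|$, which is coprime to $p$. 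Taking the $p$-primary component of both sides therefore yields $\mathcal{V}\tilde{B_0}(G)_p\cong \mathcal{V}\tilde{B_0}(G_p)$, as required.

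No real obstacle is anticipated: the essential work has already been absorbed into Proposition \ref{p2.1} (which splits the $\tilde{B_0}$-invariant over direct products) and Remark \ref{r2.5} (which ensures the Schur-Baer divisibility of orders). The only mild point to be careful about is the separation of the $p$-part from the $p'$-part in the direct sum; this requires that $\mathcal{V}\tilde{B_0}(H)$ contain no $p$-torsion, which is exactly what Remark \ref{r2.5} guarantees once one notes that $|H|$ is coprime to $p$. An alternative, equally short route is to skip the induction and apply Proposition \ref{p2.1} at once to the full decomposition $G\cong G_{p_1}\times\cdots\times G_{p_k}$, obtaining $\mathcal{V}\tilde{B_0}(G)\cong \bigoplus_{i=1}^{k}\mathcal{V}\tilde{B_0}(G_{p_i})$, and then reading off the $p$-primary part using Remark \ref{r2.5}.
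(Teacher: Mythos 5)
Your proposal is correct and follows essentially the same route as the paper: decompose the finite nilpotent group as the direct product of its Sylow subgroups, induct on the number of prime divisors, and invoke Lemma \ref{l3.8} (equivalently, Proposition \ref{p2.1} together with Remark \ref{r2.5}) to isolate the $p$-primary component. Your write-up is in fact more careful than the paper's one-line proof, since it makes explicit the coprimality argument needed in the inductive step where the complementary factor is not a $q$-group for a single prime.
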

\begin{proof}
It is known that a finite nilpotent group is the direct product of it's sylow $p$-subgroups. Thus $G=G_{p_1}\times G_{p_2}\times ... \times G_{p_k}$. The proof is completed by using induction on $k$ and applying Lemma \ref{l3.8}.
\end{proof}

\begin{corollary}\label{c3.10}
Let $G$ be a finite nilpotent group. If every sylow $p$-subgroup of $G$ has a trivial $\tilde{B_0}$-invariant, then so has $G$.
\end{corollary}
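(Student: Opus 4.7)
The plan is to reduce the statement to a direct application of Lemma \ref{l3.9} combined with the primary decomposition of finite abelian groups. First I would invoke Remark \ref{r2.5}: since the center by center by $\mathcal{W}$ variety $\mathcal{V}$ is defined by an outer commutator word $[w,x_1,x_2]$, it is a Schur--Baer variety by Baer's theorem, so $\mathcal{V}\tilde{B_0}(G)$ is a finite abelian group whose order divides a power of $|G|$. In particular, writing $|G|=p_1^{\alpha_1}\cdots p_k^{\alpha_k}$, the only primes that can divide $|\mathcal{V}\tilde{B_0}(G)|$ are those among $p_1,\dots,p_k$.

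Next I would decompose $\mathcal{V}\tilde{B_0}(G)$ as the internal direct sum of its Sylow subgroups,
\[
\mathcal{V}\tilde{B_0}(G)\;\cong\;\bigoplus_{i=1}^{k}\mathcal{V}\tilde{B_0}(G)_{p_i},
\]
which is legitimate because $\mathcal{V}\tilde{B_0}(G)$ is finite abelian. Then for each prime $p_i$, Lemma \ref{l3.9} gives the isomorphism
\[
\mathcal{V}\tilde{B_0}(G)_{p_i}\;\cong\;\mathcal{V}\tilde{B_0}(G_{p_i}),
\]
and the hypothesis that every Sylow subgroup $G_{p_i}$ has trivial $\tilde{B_0}$-invariant forces the right hand side to be trivial. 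Assembling these vanishings in the decomposition above yields $\mathcal{V}\tilde{B_0}(G)=1$.

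There is essentially no genuine obstacle here: the content has already been absorbed into Lemma \ref{l3.9} (which itself rested on the induction via Lemma \ref{l3.8} and Proposition \ref{p2.1}) and into the Schur--Baer property that ensures finiteness. The only point worth a moment's care is making sure that $\mathcal{V}\tilde{B_0}(G)$ really is abelian and finite, so that the primary decomposition is available; both facts are recorded in \cite{1} and Remark \ref{r2.5}. With those in hand, the corollary is a one-line consequence of Lemma \ref{l3.9}.
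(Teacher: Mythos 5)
Your proposal is correct and follows the same route as the paper: both invoke Remark \ref{r2.5} for finiteness and the restriction on prime divisors, then apply Lemma \ref{l3.9} to each primary component. You have merely spelled out the primary decomposition that the paper's two-line proof leaves implicit.
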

\begin{proof}
Using Remark \ref{r2.5}, $\mathcal{V}{\tilde{B_0}}(G)$ has an order dividing a power of $|G|$. Now Lemma \ref{l3.9} gives the result.
\end{proof}


\begin{thebibliography}{1}
\bibitem{1} Z. Araghi Rostami, M. Parvizi, P. Niroomand, \emph{$\tilde{B_0}$-invariant of groups}, It is available at arxiv: 2211.08725.
\bibitem{2} M. Artin and D. Mumford, \emph{Some elementary examples of unirational varieties which are not rational}, Proc. London. Math. Soc. (3) 25 (1972), 75-95. 
\bibitem{3} R. Baer, \emph{Endlichkeitskrieterien fur kommutatorgruppen}, Math. Ann. 124 (1952), 161-177. 
\bibitem{4} F. A. Bogomolov, \emph{The Brauer group of quotient spaces by linear group actions}, Math. USSRIzv. 30 (1988), 455-485.
\bibitem{5} F. A. Bogomolov, J. Maciel, T. Petrov, \emph{Unramified Brauer groups of finite simple groups of
type $A_l$}, Amer. J. Math. 126 (2004), 935–949. 
\bibitem{6} P. Hall, ‘\emph{Nilpotent Groups}, Canad. Math. Congress, Univ. of Alberta, 1957, Queen Mary College Math. Notes, Queen Mary College, London, 1970.
\bibitem{7} U. Jezernik, P. Moravec, \emph{Commutativity preserving extensions of groups}, Proc. Roy. Soc. Edinburgh Sect. A 148 (2018), no. 3, 575-592.
\bibitem{8} B. Kunyavskii, \emph{The Bogomolov multiplier of finite simple groups}, Cohomological and geometric approaches to rationality problems, 209-217, Progr. Math, 282, Birkhauser Boston, Inc, Boston, MA, 2010.
\bibitem{9} C. R. Leedham-Green, S. Mckay, \emph{Baer-invariants, Isologism, Varietal laws and
Homology}, Acta Math, 137, (1976), 99-150.
\bibitem{10} M. R. R. Moghaddam, B. Mashayekhy, and S. Kayvanfar, \emph{Subgroup Theorems for the Baer-Invariant of Groups }, Journal of Algebra, 206, (1998), 17-32. 
\bibitem{11} P. Moravec, \emph{Unramified brauer groups of finite and infinite groups}, Amer. J. Math. 134 (2012), 1679--1704.
\bibitem{12} H. Neumann, \emph{Varieties of Groups}, Berlin-Heidelberg-New York: Springer. 1967.
\bibitem{13} E. Noether, \emph{Gleichungen mit vorgeschriebener Gruppe}, Math. Ann. 78 (1916), 221–229. 
\bibitem{14} I. Schur, \emph{Uber die darstellung der endlichen gruppen durch gebrochene lineare substitu- ¨
tionen}, J. Reine Angew. Math. 127, (1904), 20-50. 
\bibitem{15} R. F. Turner-Smith, \emph{Marginal subgroup properties for outer commutator words}, Proc.
London Math. Soc. 14, (1964), 321-341. 
\end{thebibliography}
\end{document}